\DeclareSymbolFont{cyrletters}{OT2}{wncyr}{m}{n}
\DeclareMathSymbol{\Sha}{\mathalpha}{cyrletters}{"58}
\newtheorem{thm}{Theorem}[section]
\newtheorem{lem}[thm]{Lemma}
\newtheorem{prop}[thm]{Proposition}
\theoremstyle{definition}
\def\blfootnote{\xdef\@thefnmark{}\@footnotetext}
\newcommand{\holim@}[2]{%
  \vtop{\m@th\ialign{##\cr
    \hfil$#1\operator@font holim$\hfil\cr
    \noalign{\nointerlineskip\kern1.5\ex@}#2\cr
    \noalign{\nointerlineskip\kern-\ex@}\cr}}%
}
\newcommand{\varinjholim}{%
  \mathop{\mathpalette\holim@{\rightarrowfill@\textstyle}}\nmlimits@
}
\newcommand{\varprojholim}{%
  \mathop{\mathpalette\holim@{\leftarrowfill@\textstyle}}\nmlimits@
}
\newcommand{\psdlim@}[2]{%
  \vtop{\m@th\ialign{##\cr
    \hfil$#1\operator@font Psdlim$\hfil\cr
    \noalign{\nointerlineskip\kern1.5\ex@}#2\cr
    \noalign{\nointerlineskip\kern-\ex@}\cr}}%
}
\newcommand{\varinjpsdlim}{%
  \mathop{\mathpalette\psdlim@{\rightarrowfill@\textstyle}}\nmlimits@
}
\newcommand{\varprojpsdlim}{%
  \mathop{\mathpalette\psdlim@{\leftarrowfill@\textstyle}}\nmlimits@
}
\DeclareMathOperator{\coker}{coker}
\DeclareMathOperator{\Spec}{Spec}
\DeclareMathAlphabet{\mathpzc}{OT1}{pzc}{m}{it}
\numberwithin{equation}{section}
\begin{document}

\title{BF path integrals for elliptic curves and $p$-adic $L$-functions}

\author{Jeehoon Park}
\address{Jeehoon Park: QSMS, Seoul National University, 1 Gwanak-ro, Gwanak-gu, Seoul, South Korea 08826 }
\email{jpark.math@gmail.com}

\author{Junyeong Park}
\address{Junyeong Park: Department of Mathematical sciences, Ulsan National Institute of Science and Technology, UNIST-gil. 50, Ulsan 44919, Korea}
\email{junyeongp@gmail.com}

\maketitle

\date{}

\blfootnote{2020 \textit{Mathematics Subject Classification.} Primary 11M41, 11R23; Secondary 81T45 }

\blfootnote{Key words and phrases: The BF theory, BF path integrals, $p$-adic $L$-functions of elliptic curves}

\begin{abstract} We prove an arithmetic path integral formula for the inverse $p$-adic absolute values of the $p$-adic $L$-functions of elliptic curves over the rational numbers with good ordinary reduction at an odd prime $p$ based on the Iwasawa main conjecture and Mazur's control theorem. This is an elliptic curve analogue of \cite{CCKKPY}.
\end{abstract}

\tableofcontents

\section{Introduction}
The arithmetic BF theory for number fields and abelian varieties was introduced in \cite{CaKi22} to show the philosophy of arithmetic gauge theory which indicates that the path integral of the physical theory is closely related to the $L$-function of the relevant number theory. Such a trial in \cite{CaKi22} was not a complete success but there was some hint towards such philosophy, which later led to  an arithmetic path integral formula for the inverse $p$-adic absolute values of Kubota-Leopoldt 
$p$-adic $L$-functions at roots of unity (a precise connection between Kubota-Leopoldt $p$-adic $L$-function and the arithmetic BF path integral of number fields up to $p$-adic units) in \cite{CCKKPY}. The formula in \cite{CCKKPY} is yet enough to realize the philosophy but \cite{CCKKPY} was more closer to it than \cite{CaKi22}. Our goal is to do the same job as \cite{CCKKPY} in the case of elliptic curves with ordinary good reduction at $p$. Though it turns out to be a simple exercise, it seems worthwhile to record such a formula in the literature.

\subsection{The statement of the main theorem}
Let $E/\mathbb{Q}$ be an elliptic curve with semistable reduction at all places. Let $p$ be an odd prime where $E$ has good ordinary reduction. For $n\geq0$, denote
\begin{align*}
K_n:=\mathbb{Q}(\zeta_{p^{n+1}}),\quad X_n:=\Spec\mathbb{Z}[\zeta_{p^{n+1}}]
\end{align*}
where $\zeta_{p^{n+1}}$ is a primitive $p^{n+1}$-th root of unity. Also, we simply denote $K:=K_0$ so that
\begin{align*}
\Gamma:=\mathrm{Gal}(K_\infty/K)\cong\mathbb{Z}_p.
\end{align*}
Denote $\Gamma_n\subseteq\Gamma$ the subgroup of index $p^n$. On the other hand, let
\begin{align*}
\xymatrix{\omega:\mathrm{Gal}(K/\mathbb{Q}) \ar[r] & \mathbb{Z}_p^\times(\cong\mathrm{Gal}(K_\infty/\mathbb{Q}))}
\end{align*}
be the Teichm\"uller character and let $(\cdot)_r$ be the $\omega^r$-isotypic component of a $\Gamma$-module with $\mathrm{Gal}(K/\mathbb{Q})\cong\mathbb{F}_p^\times$-action. Then $\mathbb{Q}_n:=K_{n,0}\subseteq K_n$ is the subfield fixed under $\omega$ so that we have the following diagram of field extensions:
\begin{align}
\begin{aligned}
\xymatrix@!0@R=2.5pc@C=4.5pc{
& K_n \\
\mathbb{Q}_n=K_{n,0}\ar@{-}[ur]^-{\mathbb{F}_p^\times} & \\
& K \ar@{-}[uu]_-{\Gamma/\Gamma_n} \\
\mathbb{Q} \ar@{-}[ur]_-{\mathbb{F}_p^\times} \ar@{-}[uu]^-{\Gamma/\Gamma_n} &
}
\end{aligned}
\end{align}
We also denote
\begin{align*}
Y_n:=\Spec\mathcal{O}_{\mathbb{Q}_n}
\end{align*}
Denote the N\'eron model of $E$ over $\mathbb{Z}$ by $\mathcal{E}$. For $m\geq1$ we use the following notation:
\begin{align*}
\mathscr{F}^m(\Spec\mathcal{O}):=H_\mathrm{fppf}^1(\Spec\mathcal{O},\mathcal{E}[p^m])\times H_\mathrm{fppf}^1(\Spec\mathcal{O},\mathcal{E}[p^m])
\end{align*}
where $\mathcal{O}$ is the ring of integers of a number field and we view $\mathcal{E}[p^m]$ as sheaves in the flat topology. Denote $\mathcal{E}^0\subseteq\mathcal{E}$ the identity component and $\Phi_\mathcal{E}$ the group of connected components. Then, as in \cite[p. 1305]{CaKi22}, we have an exact sequence:
\begin{align*}
\xymatrix{0 \ar[r] & \mathcal{E}^0 \ar[r] & \mathcal{E} \ar[r] & \Phi_\mathcal{E} \ar[r] & 0}
\end{align*}
Note that if $\mathbb{Z}\hookrightarrow\mathcal{O}$ is ramified only at $p\in\mathbb{Z}$, then $\mathcal{E}\otimes_\mathbb{Z}\mathcal{O}$ is the N\'eron model of $E$ over $\mathcal{O}$ and the order of $\Phi_{\mathcal{E}\otimes_\mathbb{Z}\mathcal{O}}$ is the same as the order of $\Phi_\mathcal{E}$ (see Lemma \ref{H1SelSha}). We make the following assumptions:
\begin{itemize}
\item the Tate-Shafarevich group $\Sha(K_n,E)$ is finite,
\item the order of $\Phi_\mathcal{E}$ is relatively prime to $p$, and
\item $E[p]$ is irreducible as a $\mathrm{Gal}(\overline{\mathbb{Q}}/\mathbb{Q})$-representation.
\end{itemize}
Note that by the first assumption,
\begin{align*}
\Sha(K_n,E)[p^m]=\Sha(K_n,E)[p^{2m}]\quad\textrm{for all sufficiently large $m$}.
\end{align*}
With the first two assumptions, one can define a 3-dimensional arithmetic BF theory \cite{CaKi22}.
The input data of such theory consists of 
\begin{itemize}
\item (spacetime) the scheme $Y_n=\Spec (\mathcal{O}_{\mathbb{Q}_n})$
\item (space of fields) the space $\mathscr{F}^m(Y_n)$
\item (action functional)
a BF-functional (see \eqref{BFdefn} and \eqref{Fmnr}):
\begin{align*}
\xymatrix{\mathrm{BF}:\mathscr{F}^m(Y_n) \ar[r] & \displaystyle\frac{1}{p^m}\mathbb{Z}/\mathbb{Z}}
\end{align*}
 for each $n\geq0$ and $m\geq1$.
\end{itemize}
Then the output of the theory is the following path integral:
\begin{align*}
\sum_{(a,b)\in\mathscr{F}^m(Y_n)}\exp(2\pi i\mathrm{BF}(a,b)).
\end{align*}

Since $E$ has good ordinary reduction at $p$, there exists a power series $g_E(t)\in\mathbb{Z}_p[[t]]\otimes_{\mathbb{Z}_p}\mathbb{Q}_p$ which represents the $p$-adic $L$-function $L_p(E/\mathbb{Q},s)$ of an elliptic curve (see \cite{MTT} and \cite[p. 459]{Gre01}). Under the assumption that $E[p]$ is an irreducible $\mathrm{Gal}(\overline{\mathbb{Q}}/\mathbb{Q})$-module, $g_E(t)\in\mathbb{Z}_p[[t]]$ holds.
Sometimes $g_E(t)$ is called an analytic $p$-adic $L$-function of $E$, while there is the notion of an algebraic $p$-adic $L$-function of $E$ defined by the generator of the characteristic ideal of the Pontrygin dual of the Selmer group $\mathrm{Sel}(\mathbb{Q}_\infty, E[p^\infty])$ which is a torsion $\mathbb{Z}_p[[t]]$-module.\footnote{The Selmer groups for each $n\in\mathbb{N}\cup\{\infty\}$ fit into the following exact sequences:
\begin{align*}
    \xymatrixcolsep{1.5pc}\xymatrix{0 \ar[r] & E(\mathbb{Q}_n)[p^\infty]\otimes_\mathbb{Z}\mathbb{Q}_p/\mathbb{Z}_p \ar[r] & \mathrm{Sel}(\mathbb{Q}_n,E[p^\infty]) \ar[r] & \Sha(\mathbb{Q}_n,E)[p^\infty] \ar[r] & 0}.
\end{align*}
We refer to \cite[chapter 2]{Gre01} for detailed definition.}
The Iwasawa main conjecture \cite[Conjecture 4.16]{Gre01}, which is now a theorem by Skinner-Urban \cite{SkiUrb}, asserts that they are the same.

Let $|\cdot|_p$ be the $p$-adic absolute value on the algebraic closure $\overline{\mathbb Q}_p$ normalized by $|p|_p=p^{-1}$.
Now we can state our main theorem.
\begin{thm}\label{mainthm} For a fixed $n\geq0$, suppose that the following additional assumptions hold:
\begin{itemize}
\item $E(\mathbb{Q}_n)$ is finite,
\item $E(\mathbb{Q}_n)[p^\infty]=0$, and
\item $\mathrm{Sel}(\mathbb{Q}_n,E[p^\infty])$ is finite.
\end{itemize}
For each $v\in Y_n$, let $c_v^{(p)}(E)$ be the highest power of $p$ dividing the Tamagawa factor $c_v(E)$ for $E$ at $v$. Then the following path integral formula hold.
\begin{align*}
&\quad\left|\prod_{\zeta^{p^n}=1}g_E(\zeta-1)\right|_p^{-1}\\
&=\left|\widetilde{E}(\mathbb{F}_p)[p^\infty]\right|^2\cdot\prod_{\substack{v\in Y_n \\ v\nmid p,v|N_E}}c_v^{(p)}(E)\cdot\lim_{m\rightarrow\infty}\sum_{(a,b)\in\mathscr{F}^m(Y_n)}\exp(2\pi i\mathrm{BF}(a,b))
\end{align*}
where $\widetilde{E}$ is the reduction of $E$ at $p$, and $N_E$ is the conductor of $E$.
\end{thm}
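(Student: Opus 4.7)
The plan is to identify both sides of the formula with a common quantity built from $|\mathrm{Sel}(\mathbb{Q}_n,E[p^\infty])|$ together with an explicit collection of local factors at the bad primes and at $p$. The analytic side will be handled by the Iwasawa main conjecture combined with Mazur's control theorem, while the BF side will be handled by a Gauss-sum evaluation of the path integral followed by a local-to-global comparison between the flat cohomology of $\mathcal{E}[p^m]$ on $Y_n$ and the classical Selmer group. I would first unwind the left-hand side: by Skinner--Urban, $g_E(t)$ generates the characteristic ideal of $X_\infty:=\mathrm{Sel}(\mathbb{Q}_\infty,E[p^\infty])^\vee$ over $\mathbb{Z}_p[[t]]$. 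Setting $\omega_n(t):=(1+t)^{p^n}-1$, the product $\prod_{\zeta^{p^n}=1}g_E(\zeta-1)$ equals, up to a $p$-adic unit, a generator of the characteristic ideal of $X_\infty/\omega_n X_\infty$, so its inverse $p$-adic absolute value computes $|X_\infty/\omega_n X_\infty|$. The finiteness of $\mathrm{Sel}(\mathbb{Q}_n,E[p^\infty])$ together with $E(\mathbb{Q}_n)[p^\infty]=0$ then lets Mazur's control theorem identify this cardinality with $|\mathrm{Sel}(\mathbb{Q}_n,E[p^\infty])|$, up to a finite discrepancy expressible via $c_v^{(p)}(E)$ at bad primes and $|\widetilde{E}(\mathbb{F}_p)[p^\infty]|$ at $p$.

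Next I would evaluate the path integral. Writing $H_m:=H_\mathrm{fppf}^1(Y_n,\mathcal{E}[p^m])$, so that $\mathscr{F}^m(Y_n)=H_m\times H_m$, the BF functional is a bilinear pairing $H_m\times H_m\to\frac{1}{p^m}\mathbb{Z}/\mathbb{Z}$ coming from cup product and an Artin--Verdier type duality on the $3$-dimensional arithmetic scheme $Y_n$. A standard orthogonality-of-characters computation yields
\begin{align*}
\sum_{(a,b)\in H_m\times H_m}\exp(2\pi i\,\mathrm{BF}(a,b))=|H_m|\cdot|\mathrm{rad}_m|,
\end{align*}
where $\mathrm{rad}_m\subseteq H_m$ is the left radical of the pairing. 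Using Poitou--Tate duality and the stabilization $\Sha(\mathbb{Q}_n,E)[p^m]=\Sha(\mathbb{Q}_n,E)[p^{2m}]$ for large $m$, the radical is identified, in the limit $m\to\infty$, with the image of a Selmer-type subgroup of $H_m$.

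The last step compares $H_m$ and $\mathrm{rad}_m$ to the honest Selmer group $\mathrm{Sel}(\mathbb{Q}_n,E[p^m])$. Applying Lemma \ref{H1SelSha} together with the localization sequence for $H_\mathrm{fppf}^1(Y_n,\mathcal{E}[p^m])$, the flat cohomology differs from the Selmer group by a product of local factors: at each $v\mid N_E$ with $v\nmid p$ one picks up $c_v^{(p)}(E)$, and at $v=p$ the ordinary filtration of $\mathcal{E}[p^m]$ produces an $|\widetilde{E}(\mathbb{F}_p)[p^\infty]|$ factor. Since the path integral runs over two copies of $H_m$, these $p$-local contributions enter squared, accounting for the factor $|\widetilde{E}(\mathbb{F}_p)[p^\infty]|^2$. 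Equating the two expressions for $|\mathrm{Sel}(\mathbb{Q}_n,E[p^\infty])|$ then yields the stated formula. The main obstacle will be the careful bookkeeping of local factors, in particular verifying that the Mazur-control discrepancy on the analytic side and the flat-to-Selmer discrepancy on the BF side match up exactly into $|\widetilde{E}(\mathbb{F}_p)[p^\infty]|^2\prod_{v\mid N_E,\,v\nmid p}c_v^{(p)}(E)$, with the correct Tate-twist conventions used at $v=p$ so that the ordinary local condition contributes symmetrically between the two copies of $H_m$.
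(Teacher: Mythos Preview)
Your overall architecture---Iwasawa main conjecture plus Mazur's control theorem on the analytic side, and a Gauss-sum/orthogonality evaluation of the path integral on the BF side---matches the paper's approach, and your computation $\sum_{(a,b)}\exp(2\pi i\,\mathrm{BF}(a,b))=|H_m|\cdot|\mathrm{rad}_m|$ is correct (the radical is $\ker\delta$, which the Kummer sequence identifies with $E(\mathbb{Q}_n)/p^mE(\mathbb{Q}_n)$, hence trivial under the hypotheses). However, your bookkeeping of the local factors is misplaced. Under the standing assumption that $|\Phi_\mathcal{E}|$ is prime to $p$, Lemma~\ref{H1SelSha}(3) gives an \emph{exact} isomorphism $H_\mathrm{fppf}^1(Y_n,\mathcal{E}[p^m])\cong\mathrm{Sel}(\mathbb{Q}_n,E[p^m])$, so there is no ``flat-to-Selmer discrepancy'' on the BF side at all: the path integral simply stabilizes to $|\mathrm{Sel}(\mathbb{Q}_n,E[p^\infty])|$ with no Tamagawa or $\widetilde{E}(\mathbb{F}_p)$ correction.

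All of the local factors in the statement come from the \emph{other} side, namely the control-theorem error $|B_n|/|A_n|$. The paper computes this via the snake lemma applied to the restriction diagram comparing $\mathrm{Sel}(\mathbb{Q}_n,E[p^\infty])$ to $\mathrm{Sel}(\mathbb{Q}_\infty,E[p^\infty])^{\Gamma_n}$: since $E(\mathbb{Q}_n)[p^\infty]=0$ kills $\ker h_n$, one gets $|B_n|/|A_n|=|\ker g_n|$, and Greenberg's local analysis (\cite[Lemmas~3.3, 3.4 and Proposition~4.8]{Gre99}) then yields $|\ker g_n|=|\widetilde{E}(\mathbb{F}_p)[p^\infty]|^2\cdot\prod_{v\mid N_E,\,v\nmid p}c_v^{(p)}(E)$. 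In particular, the square on $|\widetilde{E}(\mathbb{F}_p)[p^\infty]|$ has nothing to do with the path integral running over two copies of $H_m$; it comes from Greenberg's computation of the local kernel at the unique prime above $p$ in the ordinary case. Your proposed ``matching of two discrepancies'' would therefore not go through as written: one of the two discrepancies is identically zero, and the other must carry the entire factor.
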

We prove this theorem in section \ref{mainthmpf}. For the proof, we first derive a path integral formula (Lemma \ref{keylemma}) in more general context, using Mazur's control theorem \cite[Theorem 4.1]{Gre01} and the Iwasawa main conjecture. Then we analyze the ``error term'' of Mazur's control theorem following the method of \cite{Gre99}.

\subsection{Open question}
Let $\alpha_p, \beta_p \in \overline{\mathbb{Q}}$ be defined by $\alpha_p+\beta_p=a_p$ and $\alpha_p\beta_p=p$, where $a_p= 1 + p -\tilde E(\mathbb{F}_p)$. Then $p$ does not divide $a_p$, which means $p$ splits in $\mathbb{Q}(\alpha_p, \beta_p)$. Let $\tau(\chi) \in \overline{\mathbb{Q}}$ be the Gauss sum for a Dirichlet character $\chi$. By the modularity of $E/\mathbb{Q}$, the $L$-value $L(E/\mathbb{Q}, \chi, 1)$ is defined and $L(E/\mathbb{Q}, \chi, 1)/\Omega_E$ is known to be algebraic by a theorem of Shimura, where $\Omega_E=\int_{E(\mathbb{R})} \frac{dx}{y}$. Let us fix a topological generator $\gamma_0 \in \mathrm{Gal}(\mathbb{Q}_\infty/\mathbb{Q})$. 
If $\chi$ is viewed as a faithful character of $\mathrm{Gal}(\mathbb{Q}_n/\mathbb{Q})$ with $n\geq 1$, then the conductor of $\chi$ is $p^{n+1}$ and $\zeta=\chi(\gamma_0)$ is a $p^n$-th root of unity. Now the interpolation property of $g_E(t)$ is given by
\begin{align*}
g_E(0) = \frac{(1-\beta_p p^{-1})^2 L(E/\mathbb{Q}, 1)}{\Omega_{E}} \\
g_E(\zeta-1) = \frac{(\beta_p)^{n+1} L(E/\mathbb{Q}, \chi, 1)}{ \tau(\chi)\Omega_{E}}
\end{align*}
for $n \geq 1$. The interesting open question is to enlarge the space of fields $\mathscr{F}^m(Y_n)$ or modify the BF-functional so that we can obtain a path integral formula for the $L$-value $\prod_{\zeta^{p^n}=1}g_E(\zeta-1)$ itself, which amounts to remove the $p$-adic absolute value from the formula in Theorem \ref{mainthm} incorporating $p$-adic unit information.

\subsection{Acknowledgement}

Jeehoon Park was supported by the National Research Foundation of Korea (NRF-2021R1A2C1006696) and the National Research Foundation of Korea grant (NRF-2020R1A5A1016126) funded by the Korea government (MSIT). Junyeong Park was supported by Samsung Science and Technology Foundation under Project Number SSTF-BA2001-02.

\section{The BF-functional for elliptic curves}
From now on, we use the following assumptions for each $n\geq0$:
\begin{itemize}
\item the Tate-Shafarevich group $\Sha(K_n,E)$ is finite, and
\item the order of $\Phi_\mathcal{E}$ is relatively prime to $p$.
\end{itemize}
We now recall the definition of the BF-functional in \cite[p.1303]{CaKi22}.
By \cite[Corollary 3.4]{Mil06}, we have a perfect pairing\footnote{By \cite[p.220]{Mil06}, we have $H_{\mathrm{fppf},c}^\bullet(X_n,-)\cong H_\mathrm{fppf}^\bullet(X_n,-)$.}:
\begin{align*}
\xymatrix{\cup:H_\mathrm{fppf}^1(X_n,\mathcal{E}[p^m])\times H_\mathrm{fppf}^2(X_n,\mathcal{E}[p^m]) \ar[r] & H_\mathrm{fppf}^3(X_n,\mathbb{G}_m)[p^m]}.
\end{align*}
together with an isomorphism as in \cite[p. 252]{Mil06}:
\begin{align*}
\xymatrix{\mathrm{inv}:H_\mathrm{fppf}^3(X_n,\mathbb{G}_m) \ar[r]^-\sim & \mathbb{Q}/\mathbb{Z}}
\end{align*}
which restricts to
\begin{align*}
\xymatrix{\mathrm{inv}:H_\mathrm{fppf}^3(X_n,\mathbb{G}_m)[p^m] \ar[r]^-\sim & \displaystyle\frac{1}{p^m}\mathbb{Z}/\mathbb{Z}}.
\end{align*}
Finally, let
\begin{align*}
\xymatrix{\delta:H_\mathrm{fppf}^1(X_n,\mathcal{E}[p^m]) \ar[r] & H_\mathrm{fppf}^2(X_n,\mathcal{E}[p^m])}
\end{align*}
be the Bockstein map coming from the exact sequence:
\begin{align*}
\xymatrix{0 \ar[r] & \mathcal{E}[p^m] \ar[r] & \mathcal{E}[p^{2m}] \ar[r]^-{p^m} & \mathcal{E}[p^m] \ar[r] & 0}.
\end{align*}
Combining all these, we define the BF-functional as follows:
\begin{align}\label{BFdefn}
\xymatrix{\mathrm{BF}:\mathscr{F}^m(X_n) \ar[r] & \displaystyle\frac{1}{p^m}\mathbb{Z}/\mathbb{Z} & (a,b) \ar@{|->}[r] & \mathrm{inv}(a\cup\delta b)}
\end{align}
\begin{lem}\label{H1SelSha} Let $\mathcal{O}$ be the ring of integers of a number field such that $\mathbb{Z}\hookrightarrow\mathcal{O}$ is ramified only at $p\in\mathbb{Z}$.
\begin{quote}
(1) $\mathcal{E}\otimes_\mathbb{Z}\mathcal{O}$ is the N\'eron model of $E$ over $\mathcal{O}$.\\
(2) The order of $\Phi_{\mathcal{E}\otimes_\mathbb{Z}\mathcal{O}}$ is the same as the order of $\Phi_\mathcal{E}$.\\
(3) We have the following isomorphisms:
\begin{align*}
H_\mathrm{fppf}^1(\Spec\mathcal{O},\mathcal{E}[p^m])\cong\mathrm{Sel}(\mathrm{Frac}(\mathcal{O}),E[p^m])
\end{align*}
\begin{align*}
H_\mathrm{fppf}^1(\Spec\mathcal{O},\mathcal{E})[p^m]\cong\Sha(\mathrm{Frac}(\mathcal{O}),E)[p^m]
\end{align*}
where $\mathrm{Frac}(\mathcal{O})$ is the field of fractions of $\mathcal{O}$.
\end{quote}
\end{lem}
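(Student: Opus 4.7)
The plan is to handle (1) and (2) by localizing at the primes of $\mathbb{Z}$ and invoking the good behaviour of N\'eron models under étale base change, and then to deduce (3) from the fppf Kummer sequence of $\mathcal{E}$ compared with its counterparts over the fraction field and over each completion.

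For (1), the morphism $\Spec\mathcal{O}[1/p]\to\Spec\mathbb{Z}[1/p]$ is étale by hypothesis, so by the étale base change property of N\'eron models, $\mathcal{E}\otimes_\mathbb{Z}\mathcal{O}$ restricted to $\Spec\mathcal{O}[1/p]$ is the N\'eron model of $E_F$ there. At $p$, good reduction makes $\mathcal{E}\otimes_\mathbb{Z}\mathbb{Z}_p$ an abelian scheme, and its further base change to $\mathcal{O}\otimes_\mathbb{Z}\mathbb{Z}_p$ remains abelian, hence is automatically the N\'eron model of its generic fibre. Uniqueness of the N\'eron model glues the two descriptions. For (2), $\Phi_\mathcal{E}$ is supported only at primes $\ell\neq p$ where $E$ has (necessarily multiplicative) reduction; for each closed point $v$ of $\mathcal{O}$ above such $\ell$, the completion $\mathcal{O}_v/\mathbb{Z}_\ell$ is unramified, the Tate parameter of $E$ at $\ell$ is preserved, and the order of the component group at $v$ agrees with that of $\Phi_{\mathcal{E},\ell}$. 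The key consequence used in (3) is that $|\Phi_{\mathcal{E}\otimes\mathcal{O}}|$ stays coprime to $p$.

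For (3), write $F:=\mathrm{Frac}(\mathcal{O})$ and consider the fppf short exact sequence
\begin{align*}
\xymatrix{0 \ar[r] & \mathcal{E}[p^m] \ar[r] & \mathcal{E} \ar[r]^-{p^m} & \mathcal{E} \ar[r] & 0}
\end{align*}
on $\Spec\mathcal{O}$, where fppf surjectivity of $p^m$ uses smoothness of $\mathcal{E}$. Restricting to $\Spec F$ and to each $\Spec F_v$ and comparing with the Kummer sequence of $E$ produces a commutative diagram whose top row yields a map $H^1_\mathrm{fppf}(\Spec\mathcal{O},\mathcal{E}[p^m])\to H^1(F,E[p^m])$ landing in the Selmer subgroup $\mathrm{Sel}(F,E[p^m])$. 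To upgrade this to an isomorphism one verifies, at each place $v$ of $\mathcal{O}$, that the image of $H^1_\mathrm{fppf}(\Spec\mathcal{O}_v,\mathcal{E}[p^m])\to H^1(F_v,E[p^m])$ coincides with the local Selmer condition $\ker(H^1(F_v,E[p^m])\to H^1(F_v,E))$: at places of good reduction this is the classical finite-flat/unramified matching, while at multiplicative places it follows from (2), since the $p$-coprimality of $|\Phi|$ forces the obstruction group $H^1_\mathrm{fppf}(\Spec\mathcal{O}_v,\mathcal{E})[p^m]$ to vanish. The second isomorphism, $H^1_\mathrm{fppf}(\Spec\mathcal{O},\mathcal{E})[p^m]\cong\Sha(F,E)[p^m]$, then follows by a diagram chase from the same long exact sequence together with the injectivity of the global map $H^1_\mathrm{fppf}(\Spec\mathcal{O},\mathcal{E})\to H^1(F,E)$.

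I expect the main obstacle to be the local comparison at multiplicative-reduction primes in (3): because $\mathcal{E}$ is not proper there, one must check carefully that the fppf cohomology of the N\'eron model captures exactly the local Selmer condition at $v$, and this is precisely where the coprimality assumption on $|\Phi_\mathcal{E}|$ is load-bearing rather than cosmetic. Once the local matching is settled, the rest of (3) reduces to a standard diagram chase.
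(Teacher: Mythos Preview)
Your treatment of (1) and (2) is the same as the paper's: \'etale base change away from $p$ preserves the N\'eron model and its component groups, while good reduction at $p$ forces connectedness there. The paper's proof of (2) is phrased a touch differently---it just observes that the number of components can only change above $p$, where the fibre is already connected---whereas you compute the component group at each $v\mid\ell$ via the Tate parameter; both arrive at the same conclusion, and you correctly isolate that the only fact needed downstream is $p\nmid|\Phi_{\mathcal{E}\otimes\mathcal{O}}|$.

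For (3) the paper does not give an argument at all: it cites \cite[Lemma~A.2]{CaKi22} and \cite[Lemma~A.3]{CaKi22} (whose content in turn traces back to \cite{Kes15}). Your sketch via the fppf Kummer sequence, local matching of $H^1_\mathrm{fppf}(\mathcal{O}_v,\mathcal{E}[p^m])$ with the Selmer condition, and the vanishing of $H^1_\mathrm{fppf}(\mathcal{O}_v,\mathcal{E})[p^m]$ at multiplicative places from the $p$-coprimality of $|\Phi|$, is precisely what those cited lemmas establish. So you have unpacked the black box rather than taken a different route. If you want to write this out in full, the place that needs care is exactly the one you flag: at multiplicative primes $\mathcal{E}[p^m]$ is only quasi-finite flat over $\mathcal{O}_v$, so the ``classical finite-flat matching'' and the glueing step (showing that a class in $\mathrm{Sel}(F,E[p^m])$ actually extends over $\Spec\mathcal{O}$) require the refined treatment of \cite{Kes15} rather than the abelian-scheme argument.
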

\begin{proof} (1) follows because the \'etale base change of a N\'eron model is still a N\'eron model and our $E$ has good ordinary reduction at $p$.\\
\indent (2) Since $\mathbb{Z}\hookrightarrow\mathcal{O}$ is ramified only at $p\in\mathbb{Z}$, the number of connected component may vary at the primes in $\mathcal{O}$ lying over $p$. Since $E$ has good ordinary reduction at $p$, our $\mathcal{E}$ is always connected at these primes.\\
\indent (3) By (1) and (2), the first isomorphism comes from \cite[Lemma A.2]{CaKi22} and the second from \cite[Lemma A.3]{CaKi22}.
\end{proof}
\begin{prop} For every $n$ and every sufficiently large $m$, we have
\begin{align*}
\sum_{(a,b)\in\mathscr{F}^m(X_n)}\exp(2\pi i\mathrm{BF}(a,b))=|\mathrm{Sel}(K_n,E[p^m])|\left|\frac{E(K_n)}{p^mE(K_n)}\right|
\end{align*}
\end{prop}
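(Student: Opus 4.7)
The plan is to evaluate the path integral as a Gauss sum on the finite abelian group $\mathscr{F}^m(X_n)$ and then identify the two resulting factors separately. For each fixed $b\in H_\mathrm{fppf}^1(X_n,\mathcal{E}[p^m])$, the assignment $a\mapsto\mathrm{BF}(a,b)=\mathrm{inv}(a\cup\delta b)$ is a group homomorphism into $\frac{1}{p^m}\mathbb{Z}/\mathbb{Z}$, so orthogonality of characters gives that $\sum_{a}\exp(2\pi i\,\mathrm{BF}(a,b))$ equals $|H_\mathrm{fppf}^1(X_n,\mathcal{E}[p^m])|$ when this character vanishes identically and is zero otherwise. By perfectness of the cup product pairing recalled just above, the character $a\mapsto\mathrm{inv}(a\cup\delta b)$ is trivial precisely when $\delta b=0$. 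Summing over $b$ therefore reduces the statement to the identity
\begin{align*}
|\ker\delta|=|E(K_n)/p^mE(K_n)|,
\end{align*}
since Lemma~\ref{H1SelSha}(3) already gives $|H_\mathrm{fppf}^1(X_n,\mathcal{E}[p^m])|=|\mathrm{Sel}(K_n,E[p^m])|$.

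To compute the kernel, the long exact sequence attached to $0\to\mathcal{E}[p^m]\to\mathcal{E}[p^{2m}]\xrightarrow{p^m}\mathcal{E}[p^m]\to0$ identifies $\ker\delta$ with the image of the induced map $p^m_{*}:H_\mathrm{fppf}^1(X_n,\mathcal{E}[p^{2m}])\to H_\mathrm{fppf}^1(X_n,\mathcal{E}[p^m])$. Transporting this through Lemma~\ref{H1SelSha}(3) yields a map $\mathrm{Sel}(K_n,E[p^{2m}])\to\mathrm{Sel}(K_n,E[p^m])$, which I would compare with the two Kummer exact sequences
\begin{align*}
0\to E(K_n)/p^kE(K_n)\to\mathrm{Sel}(K_n,E[p^k])\to\Sha(K_n,E)[p^k]\to0\qquad(k=m,\,2m).
\end{align*}
A direct cocycle computation shows that $p^m_{*}$ sends the Kummer class $\kappa_{2m}(R)$ of a point $R\in E(K_n)$ to $\kappa_m(R)$, so on the $E(K_n)$-subgroup it is the tautological surjection $E(K_n)/p^{2m}E(K_n)\twoheadrightarrow E(K_n)/p^mE(K_n)$, while on the $\Sha$-quotient it descends to multiplication by $p^m$ from $\Sha(K_n,E)[p^{2m}]$ to $\Sha(K_n,E)[p^m]$.

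Finally I invoke the stabilization $\Sha(K_n,E)[p^m]=\Sha(K_n,E)[p^{2m}]$ flagged in the introduction, valid for all sufficiently large $m$ by finiteness of $\Sha(K_n,E)$. Once $m$ is this large, multiplication by $p^m$ on $\Sha(K_n,E)[p^{2m}]$ is identically zero, so the $\Sha$-component of $p^m_{*}$ vanishes; combined with the surjectivity on the $E(K_n)$-part, a short diagram chase gives $\im(p^m_{*})=E(K_n)/p^mE(K_n)\subseteq\mathrm{Sel}(K_n,E[p^m])$, hence $|\ker\delta|=|E(K_n)/p^mE(K_n)|$ and the proposition follows. The step demanding the most care is the cocycle-level identification of $p^m_{*}$ with the natural maps on the Kummer filtration; once that functoriality is cleanly set up, the diagram chase and the $\Sha$-stabilization conclude immediately.
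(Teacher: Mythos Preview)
Your argument is correct and follows essentially the same route as the paper (which defers to \cite[Section~3]{CaKi22} and spells out the analogous computation in Proposition~\ref{Fmnr-integral}): evaluate the sum as a Gauss sum via the perfect duality to reduce to $|H^1_{\mathrm{fppf}}(X_n,\mathcal{E}[p^m])|\cdot|\ker\delta|$, then identify $\ker\delta$ with $E(K_n)/p^mE(K_n)$ using the $\Sha$-stabilization for large $m$. The only cosmetic difference is that the paper packages the last step as a factorization $H^1(\mathcal{E}[p^m])\twoheadrightarrow H^1(\mathcal{E})[p^m]\hookrightarrow H^2(\mathcal{E}[p^m])$ of $\delta$ (the injectivity being equivalent to your $\Sha[p^{2m}]=\Sha[p^m]$), whereas you compute $\ker\delta=\im(p^m_*)$ directly through the Kummer filtration on Selmer groups.
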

\begin{proof} The assertion follows from \cite[Section 3]{CaKi22} by noting that $E$ is self-dual.
\end{proof}

\section{Isotypic components of the BF-functional}\label{BF-components}
The $\mathrm{Gal}(K_\infty/\mathbb{Q})$-action on $X_n$ induces by functoriality a $\mathrm{Gal}(K_\infty/\mathbb{Q})$-module structure on $H_\mathrm{fppf}^\bullet(X_n,\mathcal{E}[p^m])$. Since $p-1$ is relatively prime to $p$, the $\mathrm{Gal}(K/\mathbb{Q})$-action on $H_\mathrm{fppf}^\bullet(X_n,\mathcal{E}[p^m])$ is semisimple. Denote
\begin{align}\label{Fmnr}
\mathscr{F}^m_r(X_n):=H_\mathrm{fppf}^1(X_n,\mathcal{E}[p^m])_r\times H_\mathrm{fppf}^1(X_n,\mathcal{E}[p^m])_{-r}
\end{align}
so that we have
\begin{align*}
\mathscr{F}^m(X_n)=\bigoplus_{r=0}^{p-2}\mathscr{F}^m_r(X_n).
\end{align*}
We will show that $\mathscr{F}_0^{m}(X_n) = \mathscr{F}^m(Y_n)$.

By functoriality, $\cup$ and $\delta$ are $\omega^r$-equivariant. Hence they restrict to
\begin{align*}
\xymatrix{\cup:H_\mathrm{fppf}^1(X_n,\mathcal{E}[p^m])_r\times H_\mathrm{fppf}^2(X_n,\mathcal{E}[p^m])_s \ar[r] & H_\mathrm{fppf}^3(X_n,\mathbb{G}_m)_{r+s}}
\end{align*}
\begin{align*}
\xymatrix{\delta:H_\mathrm{fppf}^1(X_n,\mathcal{E}[p^m])_r \ar[r] & H_\mathrm{fppf}^2(X_n,\mathcal{E}[p^m])_r}.
\end{align*}
Since the $\mathrm{Gal}(K/\mathbb{Q})$-action on
\begin{align*}
\xymatrix{\mathrm{inv}:H_\mathrm{fppf}^3(X_n,\mathbb{G}_m) \ar[r]^-\sim & \displaystyle\frac{1}{p^m}\mathbb{Z}/\mathbb{Z}}
\end{align*}
is trivial, $H_\mathrm{fppf}^3(X_n,\mathbb{G}_m)_{r+s}\neq0$ if and only if $r+s\equiv0\bmod p-1$ so the BF-functional \eqref{BFdefn} splits into
\begin{align*}
\xymatrix{\displaystyle\sum_{r=0}^{p-2}\mathrm{BF}_r:\bigoplus_{r=0}^{p-2}\mathscr{F}^m_r(X_n) \ar[r] & \displaystyle\frac{1}{p^m}\mathbb{Z}/\mathbb{Z}}.
\end{align*}
Therefore, we have
\begin{align*}
\sum_{(a,b)\in\mathscr{F}^m(X_n)}\exp(2\pi i\mathrm{BF}(a,b))=\prod_{r=0}^{p-2}\sum_{(a,b)\in\mathscr{F}^m_r(X_n)}\exp(2\pi i\mathrm{BF}(a,b))
\end{align*}
\begin{prop}\label{Fmnr-integral} For every $n$ and every sufficiently large $m$, we have
\begin{align*}
\sum_{(a,b)\in\mathscr{F}^m_r(X_n)}\exp(2\pi i\mathrm{BF}(a,b))=|\mathrm{Sel}(K_n,E[p^m])_r|\left|\left(\frac{E(K_n)}{p^mE(K_n)}\right)_{-r}\right|
\end{align*}
\end{prop}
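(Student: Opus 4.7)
The plan is to mimic the computation of the preceding proposition, which (via \cite[Section 3]{CaKi22}) rests on the perfect pairing $\cup$, the Bockstein $\delta$, and Lemma~\ref{H1SelSha}, while tracking the $\mathrm{Gal}(K/\mathbb{Q})\cong\mathbb{F}_p^\times$-action throughout. Since $p\nmid p-1$, taking $\omega^r$-isotypic components is exact. Because $H_\mathrm{fppf}^3(X_n,\mathbb{G}_m)\cong\frac{1}{p^m}\mathbb{Z}/\mathbb{Z}$ carries trivial $\mathrm{Gal}(K/\mathbb{Q})$-action, the restriction of $\cup$ to $H_\mathrm{fppf}^1(X_n,\mathcal{E}[p^m])_r\times H_\mathrm{fppf}^2(X_n,\mathcal{E}[p^m])_s$ vanishes identically unless $r+s\equiv0\pmod{p-1}$, in which case the restricted pairing is still perfect (exactness of isotypic projection plus nondegeneracy of the ambient pairing).

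Next, for each fixed $b\in H_\mathrm{fppf}^1(X_n,\mathcal{E}[p^m])_{-r}$, summing first over $a$ and using perfectness of the restricted pairing gives
\begin{align*}
\sum_{a\in H_\mathrm{fppf}^1(X_n,\mathcal{E}[p^m])_r}\exp(2\pi i\,\mathrm{inv}(a\cup\delta b))=\begin{cases}|H_\mathrm{fppf}^1(X_n,\mathcal{E}[p^m])_r| & \text{if }\delta b=0,\\ 0 & \text{otherwise}.\end{cases}
\end{align*}
Summing over $b$ then yields
\begin{align*}
\sum_{(a,b)\in\mathscr{F}^m_r(X_n)}\exp(2\pi i\,\mathrm{BF}(a,b))=|H_\mathrm{fppf}^1(X_n,\mathcal{E}[p^m])_r|\cdot|\ker\delta_{-r}|,
\end{align*}
where $\delta_{-r}$ denotes the restriction of the Bockstein to the $\omega^{-r}$-isotypic component. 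Lemma~\ref{H1SelSha} identifies the first factor with $|\mathrm{Sel}(K_n,E[p^m])_r|$, the identification being $\mathrm{Gal}(K/\mathbb{Q})$-equivariant by functoriality.

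For the second factor, the long exact sequence associated with $0\to\mathcal{E}[p^m]\to\mathcal{E}[p^{2m}]\xrightarrow{p^m}\mathcal{E}[p^m]\to 0$ identifies $\ker\delta$ with the image of $H_\mathrm{fppf}^1(X_n,\mathcal{E}[p^{2m}])\to H_\mathrm{fppf}^1(X_n,\mathcal{E}[p^m])$. A diagram chase comparing the Kummer sequences in degrees $p^m$ and $p^{2m}$ shows this image coincides with $E(K_n)/p^mE(K_n)\subseteq\mathrm{Sel}(K_n,E[p^m])$ as soon as $\Sha(K_n,E)[p^m]=\Sha(K_n,E)[p^{2m}]$, which holds for all sufficiently large $m$ by the finiteness assumption on $\Sha(K_n,E)$. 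Passing to the $\omega^{-r}$-isotypic component delivers $|\ker\delta_{-r}|=|(E(K_n)/p^mE(K_n))_{-r}|$, completing the proof. The only step requiring care is checking that all these identifications are $\mathrm{Gal}(K/\mathbb{Q})$-equivariant, but this is automatic since every map in sight (cup product, Bockstein, Kummer connecting map, Lemma~\ref{H1SelSha}) is manifestly equivariant and the isotypic decomposition is exact; the argument is therefore a routine equivariant refinement of the non-equivariant case.
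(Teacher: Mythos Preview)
Your proof is correct and follows essentially the same route as the paper's: fix $b$, use perfectness of the restricted pairing to collapse the sum over $a$ to $|H_\mathrm{fppf}^1(X_n,\mathcal{E}[p^m])_r|\cdot|(\ker\delta)_{-r}|$, invoke Lemma~\ref{H1SelSha}(3) for the first factor, and identify $\ker\delta$ with $E(K_n)/p^mE(K_n)$ for large $m$. The only cosmetic difference is that the paper obtains this last identification by citing the factorization $H_\mathrm{fppf}^1(X_n,\mathcal{E}[p^m])\twoheadrightarrow H_\mathrm{fppf}^1(X_n,\mathcal{E})[p^m]\hookrightarrow H_\mathrm{fppf}^2(X_n,\mathcal{E}[p^m])$ from \cite[p.~1304]{CaKi22}, whereas you do the equivalent diagram chase between the Kummer sequences at levels $p^m$ and $p^{2m}$ directly; both rely on the same input $\Sha(K_n,E)[p^m]=\Sha(K_n,E)[p^{2m}]$.
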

\begin{proof} If $\delta b\neq0$, then the sum over $a\in H_\mathrm{fppf}^1(X_n,\mathcal{E}[p^m])_{-r}$ becomes
\begin{align*}
\sum_a\exp(2\pi i\mathrm{BF}(a,b))=\sum_a\exp(2\pi\cdot\mathrm{inv}(\delta a\cup b))=0.
\end{align*}
On the other hand, if $\delta b=0$, then $\exp(2\pi i\mathrm{BF}(a,b))=1$. Since $\delta$ is $\omega$-equivariant,
\begin{align*}
(\ker\delta)_{-r}=H_\mathrm{fppf}^1(X_n,\mathcal{E}[p^m])_{-r}\cap\ker\delta.
\end{align*}
Combining these, we get
\begin{align*}
\sum_{(a,b)\in\mathscr{F}^m_r(X_n)}\exp(2\pi i\mathrm{BF}(a,b))=\left|H_\mathrm{fppf}^1(X_n,\mathcal{E}[p^m])_r\right||(\ker\delta)_{-r}|.
\end{align*}
For sufficiently large $m$ and $n$, we have a factorization coming from \cite[p.1304]{CaKi22}:
\begin{align*}
\xymatrixcolsep{-1.5pc}\xymatrix{
H_\mathrm{fppf}^1(X_n,\mathcal{E}[p^m]) \ar@{->>}[dr] \ar[rr]^-\delta & & H_\mathrm{fppf}^2(X_n,\mathcal{E}[p^m]) \\
& H_\mathrm{fppf}^1(X_n,\mathcal{E})[p^m] \ar@{^(->}[ur] &
}
\end{align*}
where the surjection fits into the Kummer sequence:
\begin{align*}
\xymatrix{0 \ar[r] & \displaystyle\frac{E(K_n)}{p^mE(K_n)} \ar[r] & H_\mathrm{fppf}^1(X_n,\mathcal{E}[p^m]) \ar[r] & H_\mathrm{fppf}^1(X_n,\mathcal{E})[p^m] \ar[r] & 0}
\end{align*}
Consequently,
\begin{align*}
(\ker\delta)_{-r}=\left|\left(\frac{E(K_n)}{p^mE(K_n)}\right)_{-r}\right|
\end{align*}
The other factor is determined from (3) of Lemma \ref{H1SelSha}.
\end{proof}
We conclude this section by realizing Proposition \ref{Fmnr-integral} as a path integral on $Y_n=X_{n,0}$. Since $(\mathcal{O}_{K_n})_0=\mathcal{O}_{\mathbb{Q}_n}$, the $\omega^r$-isotypic decomposition of $\mathcal{O}_{K_n}$ becomes
\begin{align*}
\mathcal{O}_{K_n}=\mathcal{O}_{K_{n,0}}\oplus\bigoplus_{r=1}^{p-2}(\mathcal{O}_{K_n})_r
\end{align*}
Then each factor in the above decomposition gives
\begin{align*}
H_\mathrm{fppf}^\bullet(X_n,\mathcal{E}[p^m])_r\cong H_\mathrm{fppf}^\bullet(X_{n,0},\mathcal{E}[p^m]\otimes_\mathbb{Z}(\mathcal{O}_{K_n})_r).
\end{align*}
In particular, we have the following isomorphisms:
\begin{align*}
H_\mathrm{fppf}^\bullet(X_n,\mathcal{E}[p^m])_0&\cong H_\mathrm{fppf}^\bullet(X_n,\mathcal{E}[p^m])^{\mathrm{Gal}(K/\mathbb{Q})}\\
&\cong H_\mathrm{fppf}^\bullet(X_{n,0},\mathcal{E}[p^m])\cong\mathrm{Sel}(\mathbb{Q}_n,E[p^m]).
\end{align*}
because taking the $\mathrm{Gal}(K/\mathbb{Q})$-invariant of abelian $p$-groups is an exact functor. Denote $D$ the Cartier dual of finite group schemes. Since $D$ commutes with the pushforward along finite \'etale ,maps (cf. \cite[Proposition D.1]{Zev}), we have
\begin{align*}
D(\mathcal{E}[p^m]\otimes_\mathbb{Z}(\mathcal{O}_{K_n})_r)\cong\mathcal{E}[p^m]\otimes_\mathbb{Z}(\mathcal{O}_{K_n})_{-r}
\end{align*}
Consequently, \eqref{Fmnr} can be rewritten as
\begin{align*}
\mathscr{F}^m_r(X_n)\cong H_\mathrm{fppf}^1(X_{n,0},\mathcal{E}[p^m]\otimes_\mathbb{Z}(\mathcal{O}_{K_n})_r)\times H_\mathrm{fppf}^1(X_{n,0},\mathcal{E}[p^m]\otimes_\mathbb{Z}(\mathcal{O}_{K_n})_{-r}).
\end{align*}
Moreover, since the pushforward along finite \'etale maps is exact by \cite[\href{https://stacks.math.columbia.edu/tag/03QP}{Tag 03QP}]{Stacks} \cite[\href{https://stacks.math.columbia.edu/tag/0DDU}{Tag 0DDU}]{Stacks}, and \cite[\href{https://stacks.math.columbia.edu/tag/0DDU}{Tag 0DDU}]{Stacks}, we have the corresponding Bockstein map:
\begin{align*}
\xymatrix{\delta:H_\mathrm{fppf}^1(X_{n,0},\mathcal{E}[p^m]\otimes_\mathbb{Z}(\mathcal{O}_{K_n})_{-r}) \ar[r] & H_\mathrm{fppf}^2(X_{n,0},\mathcal{E}[p^m]\otimes_\mathbb{Z}(\mathcal{O}_{K_n})_{-r})}.
\end{align*}
Therefore, Proposition \ref{Fmnr-integral} can be rewritten as a path integral on $Y_n$ as desired. In particular, we have $\mathscr{F}^m_0(X_n)=\mathscr{F}^m(X_{n,0})=\mathscr{F}^m(Y_n)$ and hence
\begin{align}\label{pathint-first-form}
\sum_{(a,b)\in\mathscr{F}^m(Y_n)}\exp(2\pi i\mathrm{BF}(a,b))=|\mathrm{Sel}(\mathbb{Q}_n,E[p^m])|\left|\frac{E(\mathbb{Q}_n)}{p^mE(\mathbb{Q}_n)}\right|.
\end{align}

\section{The proof of the main theorem}\label{mainthmpf}
In this section, we prove Theorem \ref{mainthm}. We begin with a lemma. Define abelian groups $A_n$ and $B_n$ via the following exact sequence:
\begin{align*}
\xymatrix{0 \ar[r] & A_n \ar[r] & \mathrm{Sel}(\mathbb{Q}_n,E[p^\infty]) \ar[r] & \mathrm{Sel}(\mathbb{Q}_\infty,E[p^\infty])^{\Gamma_n} \ar[r] & B_n \ar[r] & 0},
\end{align*}
where $\mathrm{Sel}(F, E[p^\infty])$ is the Selmer group over $F$ associated to $E[p^\infty]$.
By Mazur's control theorem \cite[Theorem 4.1]{Gre01}, $A_n$ and $B_n$ are finite $p$-groups whose orders are bounded as $n\rightarrow\infty$.
\begin{lem} \label{keylemma} For each $n\geq0$, the BF functional satisfies the following formula:
\begin{align*}
&\quad\left|\prod_{\zeta^{p^n}=1}g_E(\zeta-1)\right|_p^{-1}\\
&=\frac{1}{|E(\mathbb{Q}_n)[p^\infty]|}\frac{|B_n|}{|A_n|}\lim_{m\rightarrow\infty}\frac{1}{p^{m\cdot\mathrm{rank}_\mathbb{Z}(E(\mathbb{Q}_n))}}\sum_{(a,b)\in\mathscr{F}^m(Y_n)}\exp(2\pi i\mathrm{BF}(a,b)).
\end{align*}
\end{lem}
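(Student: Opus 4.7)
The plan is to combine the path integral formula \eqref{pathint-first-form} with two Iwasawa-theoretic inputs: Mazur's control theorem, which relates $\mathrm{Sel}(\mathbb{Q}_n, E[p^\infty])$ to $\mathrm{Sel}(\mathbb{Q}_\infty, E[p^\infty])^{\Gamma_n}$ through the error terms $A_n, B_n$, and the Iwasawa Main Conjecture of Skinner--Urban, which identifies the characteristic ideal of $\mathrm{Sel}(\mathbb{Q}_\infty, E[p^\infty])^\vee$ with $(g_E)$.

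On the Iwasawa side, set $X := \mathrm{Sel}(\mathbb{Q}_\infty, E[p^\infty])^\vee$, a finitely generated torsion $\Lambda := \mathbb{Z}_p[[T]]$-module with $T = \gamma - 1$ for a topological generator $\gamma$ of $\Gamma$. Irreducibility of $E[p]$ ensures $\mu(X) = 0$ (by Kato), and the IMC identifies the characteristic ideal of $X$ with $(g_E)$. Pontryagin dualizing the exact sequence $0 \to \mathrm{Sel}(\mathbb{Q}_\infty, E[p^\infty])[\omega_n] \to \mathrm{Sel}(\mathbb{Q}_\infty, E[p^\infty]) \xrightarrow{\omega_n} \mathrm{Sel}(\mathbb{Q}_\infty, E[p^\infty])$ with $\omega_n := (1+T)^{p^n} - 1$ yields $|\mathrm{Sel}(\mathbb{Q}_\infty, E[p^\infty])^{\Gamma_n}| = |X/\omega_n X|$; the resultant identity $|\Lambda/(f, \omega_n)| = |\prod_{\zeta^{p^n}=1} f(\zeta - 1)|_p^{-1}$ applied to the elementary divisors of $X$ (whose product is $g_E$ up to a unit) gives
\[
\left|\mathrm{Sel}(\mathbb{Q}_\infty, E[p^\infty])^{\Gamma_n}\right| = \left|\prod_{\zeta^{p^n} = 1} g_E(\zeta - 1)\right|_p^{-1}.
\]
Mazur's control exact sequence then rearranges to $|\prod_\zeta g_E(\zeta-1)|_p^{-1} \cdot |A_n| = |\mathrm{Sel}(\mathbb{Q}_n, E[p^\infty])| \cdot |B_n|$.

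On the path-integral side, substitute \eqref{pathint-first-form} and analyze the asymptotics as $m \to \infty$. Writing $E(\mathbb{Q}_n) \cong \mathbb{Z}^r \oplus E(\mathbb{Q}_n)_{\mathrm{tor}}$ by Mordell--Weil with $r = \mathrm{rank}_\mathbb{Z}(E(\mathbb{Q}_n))$, once $m$ is large enough that $p^m$ annihilates $E(\mathbb{Q}_n)[p^\infty]$ and $\Sha(\mathbb{Q}_n, E)[p^\infty]$ (the latter finite by the standing assumption on $\Sha(K_n, E)$), one has $|E(\mathbb{Q}_n)/p^m E(\mathbb{Q}_n)| = p^{mr}\,|E(\mathbb{Q}_n)[p^\infty]|$, and the Kummer sequence
\[
0 \to E(\mathbb{Q}_n)/p^m E(\mathbb{Q}_n) \to \mathrm{Sel}(\mathbb{Q}_n, E[p^m]) \to \Sha(\mathbb{Q}_n, E)[p^m] \to 0
\]
gives $|\mathrm{Sel}(\mathbb{Q}_n, E[p^m])| = p^{mr}\,|E(\mathbb{Q}_n)[p^\infty]|\,|\Sha(\mathbb{Q}_n, E)[p^\infty]|$. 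Comparing with the $p^\infty$-Selmer sequence $0 \to E(\mathbb{Q}_n) \otimes \mathbb{Q}_p/\mathbb{Z}_p \to \mathrm{Sel}(\mathbb{Q}_n, E[p^\infty]) \to \Sha(\mathbb{Q}_n, E)[p^\infty] \to 0$ identifies the stabilized ratio $\lim_m p^{-mr}\,|\mathrm{Sel}(\mathbb{Q}_n, E[p^m])|\,|E(\mathbb{Q}_n)/p^m E(\mathbb{Q}_n)|$ with a product of $|\mathrm{Sel}(\mathbb{Q}_n, E[p^\infty])|$ and a power of $|E(\mathbb{Q}_n)[p^\infty]|$; the redundant torsion factor is precisely absorbed by the $1/|E(\mathbb{Q}_n)[p^\infty]|$ prefactor in the lemma. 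Combining this with the Iwasawa identity above produces the claim.

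The main technical obstacle is the careful bookkeeping of factors of $|E(\mathbb{Q}_n)[p^\infty]|$, which enter from three independent sources: (i) the Mordell--Weil decomposition of $E(\mathbb{Q}_n)/p^m E(\mathbb{Q}_n)$; (ii) the Kummer comparison between $\mathrm{Sel}(\mathbb{Q}_n, E[p^m])$ and $\mathrm{Sel}(\mathbb{Q}_n, E[p^\infty])[p^m]$, whose kernel stabilizes to $E(\mathbb{Q}_n)[p^\infty]$ for large $m$; and (iii) implicitly into $A_n$ through Greenberg's description of $\ker(\mathrm{Sel}_n \to \mathrm{Sel}_\infty^{\Gamma_n})$ via $H^1(\Gamma_n, E(\mathbb{Q}_\infty)[p^\infty])$. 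A secondary technicality is verifying that $|X_{\Gamma_n}| = |\prod g_E(\zeta-1)|_p^{-1}$ holds exactly (not merely up to a pseudo-null correction), which rests on the vanishing of $\mu(X)$ granted by $E[p]$-irreducibility and on coprimality of $g_E$ with $\omega_n$; both are automatic in the regime where the Selmer groups in play are finite.
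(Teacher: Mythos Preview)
Your overall strategy matches the paper's: combine \eqref{pathint-first-form} with the Mordell--Weil computation of $|E(\mathbb{Q}_n)/p^mE(\mathbb{Q}_n)|$, then on the Iwasawa side use the Main Conjecture together with the structure theorem to identify $|X/\omega_nX|$ with $\left|\prod_\zeta g_E(\zeta-1)\right|_p^{-1}$, and finally bridge to $|\mathrm{Sel}(\mathbb{Q}_n,E[p^\infty])|$ via the control-theorem exact sequence defining $A_n$ and $B_n$. The paper does exactly this (writing $V_\infty$ for your $X$), so the architecture is the same.

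There is, however, a genuine technical slip in your final paragraph. You assert that the exact equality $|X_{\Gamma_n}|=\left|\prod_\zeta g_E(\zeta-1)\right|_p^{-1}$ ``rests on the vanishing of $\mu(X)$ granted by $E[p]$-irreducibility.'' This is off on two counts. First, irreducibility of $E[p]$ is \emph{not} known to imply $\mu(X)=0$; that implication is a conjecture of Greenberg, and Kato's results do not establish it. Second, even if $\mu=0$ were available it would be the wrong hypothesis: vanishing of the $\mu$-invariant says nothing about finite $\Lambda$-submodules of $X$, and it is precisely those that prevent the pseudo-isomorphism from computing $|X/\omega_nX|$ on the nose. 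What the paper actually invokes is that $X=V_\infty$ has no nonzero pseudo-null $\Lambda$-submodule, which is \cite[Proposition~4.15]{Gre99} and \emph{does} follow from irreducibility of $E[p]$. With that input (and coprimality of $g_E$ with $\omega_n$, as you note), a snake-lemma comparison against the structure-theorem decomposition yields $|X/\omega_nX|=\prod_i|\Lambda/(f_i,\omega_n)|$ exactly. So your ``secondary technicality'' is correctly located but incorrectly resolved; swapping the appeal to $\mu=0$ for Greenberg's no-pseudo-null-submodule result repairs the argument and brings it in line with the paper.
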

\begin{proof} The proof is based on the Iwasawa main conjecture \cite[Conjecture 4.16]{Gre01} and Mazur's control theorem.

Since $E(\mathbb{Q}_n)$ is a finitely generated abelian group, $E(\mathbb{Q}_n)_\mathrm{tors}$ is a finite abelian group. Hence, for every sufficiently large $m$, the composition
\begin{align*}
\xymatrix{E(\mathbb{Q}_n)[p^\infty]=E(\mathbb{Q}_n)[p^m] \ar@{^(->}[r] & E(\mathbb{Q}_n) \ar[r] & \displaystyle\frac{E(\mathbb{Q}_n)_\mathrm{tors}}{p^mE(\mathbb{Q}_n)_\mathrm{tors}}}
\end{align*}
is an isomorphism. Consequently, for sufficiently large $m$,
\begin{align*}
\left|\frac{E(\mathbb{Q}_n)}{p^mE(\mathbb{Q}_n)}\right|=p^{m\cdot\mathrm{rank}_\mathbb{Z}(E(\mathbb{Q}_n))}|E(\mathbb{Q}_n)[p^\infty]|.
\end{align*}
Hence \eqref{pathint-first-form} can be rewritten as
\begin{align*}\frac{1}{p^{m\cdot\mathrm{rank}_\mathbb{Z}(E(\mathbb{Q}_n))}}\sum_{(a,b)\in\mathscr{F}^m(X_{n,0})}\exp(2\pi i\mathrm{BF}(a,b))=|\mathrm{Sel}(\mathbb{Q}_n,E[p^m])|\cdot|E(\mathbb{Q}_n)[p^\infty]|.
\end{align*}

Denote $(\cdot)^\vee$ the Pontryagin dual of abelian groups and let
\begin{align*}
V_n:=\mathrm{Sel}(\mathbb{Q}_n,E[p^\infty])^\vee.
\end{align*}
Choose a topological generator $\gamma\in\Gamma$, which gives a topological ring isomorphism
\begin{align*}
\xymatrix{\Lambda:=\mathbb{Z}_p[[t]] \ar[r]^-\sim & \mathbb{Z}_p[[\Gamma]] & t \ar@{|->}[r] & \gamma-1}
\end{align*}
Along this isomorphism, we can write
\begin{align*}
\left(\mathrm{Sel}(\mathbb{Q}_\infty,E[p^\infty])^{\Gamma_n}\right)^\vee\cong\frac{V_\infty}{((t+1)^{p^n}-1)V_\infty}.
\end{align*}
Note that $V_\infty$ is a torsion $\Lambda$-module by \cite[Theorem 1.5]{Gre99}. Now we use the following assumption:
\begin{itemize}
\item $E[p]$ is irreducible as a $\mathrm{Gal}(\overline{\mathbb{Q}}/\mathbb{Q})$-representation.
\end{itemize}
Then $g_E(t)\in\Lambda$ by \cite[p. 459]{Gre01}, and $V_\infty$ has no nonzero pseudo-null $\Lambda$-submodule by \cite[Proposition 4.15]{Gre99}. Applying the structure theorem for $\Lambda$-modules \cite[Theorem 3.1]{Gre01} to $V_\infty$ (cf. \cite[Lemma 4]{Skin}) and using \cite[Exercise 3.7]{Gre01}, we get
\begin{align*}
\left|\frac{V_\infty}{((t+1)^{p^n}-1)V_\infty}\right|=\left|\frac{\Lambda}{(g_E(t),(t+1)^p-1)}\right|=u\cdot\prod_{\zeta^{p^n}=1}g_E(\zeta-1),\quad u\in\mathbb{Z}_p^\times.
\end{align*}
This in turn gives the following equalities:
\begin{align*}
|\mathrm{Sel}(\mathbb{Q}_n,E[p^\infty])|&=|V_n|\\
&=\frac{|A_n^\vee|}{|B_n^\vee|}\left|\frac{V_\infty}{((t+1)^{p^n}-1)V_\infty}\right|=\frac{|A_n|}{|B_n|}\cdot u\cdot\prod_{\zeta^{p^n}=1}g_E(\zeta-1).
\end{align*}
Since $\mathrm{Sel}(\mathbb{Q}_n,E[p^\infty])$, $A_n$, and $B_n$ are abelian $p$-groups, we conclude that
\begin{align*}
\frac{|B_n|}{|A_n|}|\mathrm{Sel}(\mathbb{Q}_n,E[p^\infty])|=\left|\prod_{\zeta^{p^n}=1}g_E(\zeta-1)\right|_p^{-1},
\end{align*}
which finishes the proof.
\end{proof}
Let us return to the proof of Theorem \ref{mainthm}. By assumption, $\mathrm{rank}_\mathbb{Z}(E(\mathbb{Q}_n))=0$, and $|E(\mathbb{Q}_n)[p^\infty]|=1$. Hence, in view of Lemma \ref{keylemma}, it remains to determine $|A_n|$ and $|B_n|$ in Lemma \ref{keylemma}. Following \cite[section 3]{Gre99}, we begin with the following commutative diagram with exact rows:
\begin{align*}
\xymatrix{
0 \ar[r] & \mathrm{Sel}(\mathbb{Q}_n,E[p^\infty]) \ar[d]^-{s_n} \ar[r] & H^1(\mathbb{Q}_n,E[p^\infty]) \ar[d]^-{h_n} \ar[r] & \mathcal{G}_E(\mathbb{Q}_n) \ar[d]^-{g_n} \ar[r] & 0 \\
0 \ar[r] & \mathrm{Sel}(K_{n,\infty},E[p^\infty])^{\Gamma_n} \ar[r] & H^1(K_{n,\infty},E[p^\infty])^{\Gamma_n} \ar[r] & \mathcal{G}_E(K_{n,\infty})
}
\end{align*}
By our assumption and the proof of \cite[Lemma 3.1]{Gre99}, we have
\begin{align*}
    |\ker h_n|=|E(\mathbb{Q}_n)[p^\infty]|=1.
\end{align*}
From the snake lemma together with \cite[Lemma 3.2]{Gre99}, we get
\begin{align*}
    \frac{|B_n|}{|A_n|}=\frac{|\coker s_n|}{|\ker s_n|}=\frac{|\ker g_n|}{|\ker h_n|}=|\ker g_n|.
\end{align*}
Since $\Gamma_n\cong\mathbb{Z}_p$, we may apply \cite[Lemma 4.7]{Gre99} to the above diagram. Using \cite[Lemma 3.3]{Gre99}, \cite[Lemma 3.4]{Gre99}, and \cite[Proposition 4.8]{Gre99}, we get
\begin{align*}
|\ker g_n|=\left|\widetilde{E}(\mathbb{F}_p)[p^\infty]\right|^2\cdot\prod_{\substack{v\in Y_n \\ v\nmid p,v|N_E}}c_v^{(p)}(E).
\end{align*}
Note that $\mathbb{Q}_n/\mathbb{Q}$ is totally ramified at $p$ so the unique prime in $Y_n$ lying over $p$ has the residue field $\mathbb{F}_p$. This concludes the proof of Theorem \ref{mainthm}.


\begin{thebibliography}{99}

\bibitem{CCKKPY} Carlson, Magnus; Hee-Joong, Chung; Kim, Dohyeong; Kim, Minhyong; Park, Jeehoon; Yoo, Hwajong: \emph{Path integrals and $p$-adic $L$-functions}, \href{https://arxiv.org/abs/2207.03732v1}{arXiv:2207.03732} (2022).

\bibitem{CaKi22} Carlson, Magnus; Kim, Minhyong: \emph{A note on abelian arithmetic BF-theory}, Bull. London Math. Soc., 54 (2022) 1299--1307.

\bibitem{Kes15} \v{C}esnavi\v{c}ius, K\k{e}stutis: \emph{Selmer groups as flat cohomology groups}, J. Ramanujan Math. Soc. 31, No.1 (2016) 31--61.

\bibitem{Gre99} Greenberg, Ralph: \emph{Iwasawa theory for elliptic curves}, Arithmetic theory of elliptic curves (Cetraro, 1997), Lecture Notes in Math., 1716 (1999) 51--144.

\bibitem{Gre01} Greenberg, Ralph: \emph{Introduction to Iwasawa theory for elliptic curves}, Arithmetic algebraic geometry (Park City, UT, 1999), IAS/Park City Math. Ser., Vol. 9 (2001) 407--464.

\bibitem{Mil06} Milne, James S.: \emph{Arithmetic duality theorems}, BookSurge, LLC, Charleston, SC, 2nd ed. (2006).

\bibitem{MTT} Mazur, Barry; Tate, John; Teitelbaum Jeremy: On $p$-adic analogues of the conjectures of Birch and Swinnerton-Dyer, Invent. Math. 84 (1986), 1-48.

\bibitem{Zev} Rosengarten, Zev: \emph{Tate duality in positive dimension over function fields}, \href{https://arxiv.org/abs/1805.00522v9}{arXiv:1805.00522} (2018).

\bibitem{Skin} Skinner, Christopher: \emph{Lectures on the Iwasawa theory of elliptic curves}, Lecture notes for the Arizona Winter school, \url{https://swc-math.github.io/aws/2018/2018SkinnerNotes.pdf}.

\bibitem{SkiUrb} Skinner, Christopher; Urban, Eric: \emph{The Iwasawa Main Conjectures for $\mathrm{GL}_2$}, Invent. math. 195, 1-277 (2014).

\bibitem{Stacks} The stacks project authors: \emph{Stacks project}, \url{https://stacks.math.columbia.edu} (2018).
\end{thebibliography}
\end{document}